
%
\documentclass[11pt]{amsart}
%
%
\usepackage[left=2cm,top=2cm,right=3cm,nofoot]{geometry}
\usepackage{amsmath,mathtools}%
\usepackage{amsfonts}%
\usepackage{amssymb}%
\usepackage{graphicx}
\usepackage{esint}
\usepackage{hyperref}
\usepackage{enumitem}
\usepackage{accents}
\usepackage{etoolbox}
\patchcmd{\subsection}{-.5em}{.5em}{}{}
%
\newtheorem{theorem}{Theorem}[section]
\theoremstyle{plain}

\newtheorem{corollary}[theorem]{Corollary}

\newtheorem{definition}[theorem]{Definition}

\newtheorem{lemma}[theorem]{Lemma}

\numberwithin{equation}{section}
\theoremstyle{plain}

\usepackage{etoolbox}
\AtEndEnvironment{proof}{\setcounter{claim}{0}}

\begin{document}

\title[Nodal solutions to Yamabe-type equations]{A note on sign-changing solutions to supercritical Yamabe-type equations}

\begin{abstract}On a closed Riemannian manifold $(M^n ,g)$, we consider the Yamabe-type equation
$-\Delta_g u + \lambda u = \lambda |u|^{q-1}u$, where
$\lambda \in \mathbb{R}_{+}$ and $q>1$. We assume that $M$ admits a proper isoparametric function $f$ with 
focal submanifolds
 of positive dimension. If $k>0$ is the minimum of the dimensions of the focal submanifolds of $f$, we let $q^* =
\frac{n-k+2}{n-k-2}$. We prove the existence of infinite $f$-invariant sign-changing solutions to the equation
when $1<q<q^*$.
\end{abstract}

\author{ Jurgen Julio-Batalla}
\address{ Universidad Industrial de Santander, Carrera 27 calle 9, 680002, Bucaramanga, Santander, Colombia}
\email{ jajuliob@uis.edu.co}
\thanks{  The author was supported by project 3756 of Vicerrector\'ia de Investigaci\'on y Extensi\'on of Universidad Industrial de Santander}

\maketitle

\section{Introduction}

Let $(M^n ,g)$ be a closed connected Riemannian manifold of dimension $n\geq 3$. We let $\Delta =div (\nabla) $ 
be the non-positive Laplace 
operator. In this paper we will  study nodal (sign-changing) solutions of the  Yamabe-type equation 
\begin{equation}\label{YT}
-\Delta_g u + \lambda u = \lambda |u|^{q-1}u ,
\end{equation} 
where $\lambda \in \mathbb{R}_{+}$ and $q>1$.

If the scalar curvature of $g$, ${\textbf{s}}_g$, is constant equal to $\frac{n-2}{4(n-1)} \lambda$ and
$q=p_n =  \frac{n+2}{n-2}$ is the critical Sobolev exponent, then (\ref{YT}) is the Yamabe equation. In this case, if $u$ is a positive solution of the equation, then $u^{\frac{4}{n-2}} g$ is a Riemannian metric, conformal to $g$, which also has constant scalar 
curvature ${\bf s}_g$. It is an important result obtained in several steps by H. Yamabe \cite{Yamabe}, N. Trudinger
\cite{Trudinger}, T. Aubin \cite{Aubin} and R. Schoen \cite{Schoen}, that given any closed Riemannian manifold $(M^n,g)$ of dimension at least 3, there is
a conformal metric of constant scalar curvature. Such conformal metric is unique if it has non-positive
scalar curvature. But in the positive case the solution in general is not unique and many important
results have been obtained about the space of conformal constant scalar curvature metrics, which amounts
to multiplicity results for positive solutions of the Yamabe equation.
Many  results have also been obtained about {\it nodal solutions}  of the Yamabe equation. If $u$
is a nodal solution of Equation (\ref{YT})  then $u$ vanishes somewhere and
$|u|^{\frac{4}{n-2}} g$ is not a Riemannian metric.  However such solutions still have geometric and analytic interest.
In the case of round sphere $(\mathbb{S}^n,g_0)$ the first results about the existence of 
nodal solutions to the Yamabe equation were obtained by Y. Ding \cite{Ding}, using the rich family of 
symmetries of the sphere. Later, more results were obtained for instance by M. del Pino, M. Musso, F. Pacard, A. Pistoia \cite{dpmpp}; M. Medina, M. Musso \cite{Medina} and by J. C. Fern\'andez, J. Petean \cite{JC}. In the context of general
Riemannian manifolds existence of nodal solutions to the Yamabe equation under very general conditions was obtained
by B. Ammann and  E. Humbert \cite{Ammann}. For other results on nodal solutions of this equation see the
articles by G. Henry \cite{Henry}, J. V\'etois \cite{Veto},  J. Julio-Batalla, J. Petean \cite{Nodal} and  M. J. Gursky,  S. P\'erez-Ayala in \cite{Gursky}.

When $q<p_n$, Equation (\ref{YT}) is called subcritical. Solutions of Equation \ref{YT} on the closed Riemannian manifold
$(M^n,g)$ in the subcritical range 
yield solutions of the Yamabe
equation for appropriate manifolds which fiber over $M$ (in particular certain Riemannian products with $M$). 
See for instance the articles by G. Henry, J. Petean \cite{HenryPetean}, by R. Bettiol, P. Piccione
\cite{BettiolPiccione} and by L. L. de Lima, P. Piccione , M. Zedda \cite{deLimaPiccioneZedda} for positive solutions. 
And the article by J. Petean \cite{Petean} for nodal solutions. 
Supercritical equations are more difficult to treat, since
most of the techniques employed in the subcritical and critical cases, do not apply in the supercritical range. But there
are some results, see for instance the article by J. C. Fern\'{a}ndez, O.  Palmas and J. Petean \cite{FernandezPalmasPetean}  
in the case of the sphere.

\medskip

In this article we will consider the situation when the Riemannian manifold $(M^n,g)$ admits a proper isoparametric function $f$  with positive dimensional level sets. Recall that a 
non-constant, smooth  function $f:M\rightarrow[t_0,t_1]$  is called  {\it isoparametric} if there exist smooth functions $a,b$ 
(defined on $[t_0 , t_1 ]$)
which verify that   $|\nabla f|^2=b(f)$ and  $\Delta f=a(f)$.  

From the general theory of isoparametric functions, introduced by Q-M. Wang in \cite{Wang}, 
it is known that the only zeros of the function  $b:[t_0,t_1]\rightarrow \mathbb{R}_{\geq 0}$ are $t_0$ and $t_1$,  which means that the only critical values of $f$ are its minimum and its maximum. It is also known that $M_0 = f^{-1} (t_0 )$ and
$M_1 = f^{-1} (t_1 )$ are smooth submanifolds; they are called the {\it focal submanifolds}
of $f$. We let $d_i$ be  the dimension of $M_i$. The isoparametric function $f$ is called {\it proper} when
$d_0 , d_1 \leq n-2$. In this case, all the level sets 
$ f^{-1} (t)$ are connected. 

We will say that  a function $u$ on $M$ is {\it $f$-invariant} if it is constant along the level sets of $f$, which is 
equivalent to say that $u=\varphi \circ f$ for a function $\varphi : [t_0 , t_1 ] \rightarrow \mathbb{R}$. We
will look for solutions of Equation (\ref{YT}) which are $f$-invariant. 
Many results have been obtained regarding solutions of the Yamabe-type equations invariant by an isoparametric
function. In \cite{HenryPetean} G. Henry and J. Petean considered the Yamabe equation on Riemannian products
with the sphere. They proved multiplicity results for conformal constant scalar curvature metrics on the products by
constructing solutions of subcritical Yamabe-type equations on the sphere invariant under a fixed isoparametric function.
They reduce the equation to an ordinary differential equation and use 
global bifurcation theory. 
In \cite{Henry} G. Henry proved the existence
of a $f$-invariant nodal solution of the Yamabe equation which minimizes energy among nodal solutions. In 
\cite{JC} J. C. Fern\'{a}ndez and J. Petean also use the reduction of the equation to an ordinary differential
equation to show multiplicity results for nodal solutions of the 
Yamabe equation on the round sphere which are  $f$-invariant.  J. C.  Fern\'{a}ndez, O. Palmas and J. Petean proved in \cite{FernandezPalmasPetean} 
multiplicity results for nodal solutions of 
supercritical Yamabe-type equations on the sphere, also using the reduction to an ordinary differential
equation.

\medskip

For a fixed, proper,  isoparametric function $f$ on the closed Riemannian manifold $(M^n,g)$ we let $k=\min \{ d_0 , d_1 \}$, the minimum
of the dimensions of the focal submanifolds. We let $q^* = \frac{n-k+2}{n-k-2}$.

We define the energy of a solution $u$ of Equation (\ref{YT}) as $$E(u)=\int_{M}|u|^{q+1}dv_g,$$
where $dv_g$ is the volume element of $(M^n,g)$.

Our main result is the following

\begin{theorem} Consider a proper isoparametric function $f$ on a closed Riemannian manifold $(M^n,g)$. 
Assume that the minimum dimension of level sets of $f$, $k$, is strictly positive. Assume
that $1<q<\frac{n-k+2}{n-k-2}$. Then
there exist infinite solutions of Equation (\ref{YT}) with arbitrarily large energy.
\end{theorem}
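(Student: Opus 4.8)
The plan is to reduce Equation~(\ref{YT}) restricted to $f$-invariant functions to a one-dimensional singular ODE boundary value problem, and then to produce infinitely many sign-changing solutions of that ODE with diverging energy by a shooting/phase-plane argument combined with a careful analysis of the behaviour at the focal submanifolds. Writing $u = \varphi\circ f$ with $\varphi:[t_0,t_1]\to\mathbb{R}$, the identities $|\nabla f|^2 = b(f)$ and $\Delta f = a(f)$ give
\[
\Delta_g u = b(f)\,\varphi''(f) + a(f)\,\varphi'(f),
\]
so $u$ solves (\ref{YT}) iff $\varphi$ solves
\begin{equation}\label{ode}
-b(t)\,\varphi'' - a(t)\,\varphi' + \lambda\varphi = \lambda|\varphi|^{q-1}\varphi, \qquad t\in(t_0,t_1),
\end{equation}
with the natural (Neumann-type) boundary behaviour forced at the endpoints, where $b$ vanishes. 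It is convenient to rewrite the principal part in divergence form: there is a smooth positive density $\rho$ on $(t_0,t_1)$, determined by $a$ and $b$ (essentially $\rho = b^{-1}e^{\int a/b}$ up to normalization, matching the coarea factor of $f$), so that (\ref{ode}) becomes $-(\rho\, b\,\varphi')' + \lambda\rho\,\varphi = \lambda\rho\,|\varphi|^{q-1}\varphi$, and the energy is $E(u) = c\int_{t_0}^{t_1}\rho\,|\varphi|^{q+1}\,dt$ for a fixed constant $c>0$. The key structural point, which is where the exponent $q^\ast = \frac{n-k+2}{n-k-2}$ enters, is that near a focal submanifold of dimension $d_i$ the function $b$ vanishes like the square of the distance to the focal variety while $\rho$ behaves like that distance to the power $n-1-d_i$; hence the weighted space in which we work embeds into $L^{q+1}$ compactly precisely when $q+1 < \frac{2(n-k)}{n-k-2}$, i.e. $q < q^\ast$, with $n-k$ playing the role of an effective dimension.

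With this reduction in hand, the first step is to set up the variational framework: let $H$ be the Hilbert space of $f$-invariant functions in $H^1(M)$, equivalently the weighted Sobolev space of $\varphi$ with $\int \rho(b|\varphi'|^2 + \lambda\varphi^2) < \infty$, and let
\[
J(\varphi) = \frac12\int_{t_0}^{t_1}\rho\,(b\,|\varphi'|^2 + \lambda\,\varphi^2)\,dt \;-\; \frac{\lambda}{q+1}\int_{t_0}^{t_1}\rho\,|\varphi|^{q+1}\,dt
\]
on $H$. Because $1<q<q^\ast$, the embedding $H\hookrightarrow L^{q+1}(\rho\,dt)$ is compact, so $J$ satisfies the Palais–Smale condition; $J$ also has the mountain-pass geometry (it is positive on a small sphere and negative along rays), and, crucially, $J$ is even. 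The second step is therefore to invoke the symmetric mountain-pass theorem (Ambrosetti–Rabinowitz / Clark): since $H$ is infinite-dimensional and $J$ is an even $C^1$ functional satisfying (PS), bounded below near $0$ with the linking geometry, there is an unbounded sequence of critical values $c_m\to\infty$, produced via the Krasnoselskii genus and the min-max values $c_m = \inf_{A\in\Gamma_m}\sup_{\varphi\in A} J(\varphi)$ over sets of genus $\geq m$. Each critical point $\varphi_m$ gives an $f$-invariant weak solution $u_m$ of (\ref{YT}), and elliptic regularity (applied on $M$, away from and across the focal submanifolds) makes it a genuine smooth solution.

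The third step is to translate $c_m\to\infty$ into the stated conclusion about energy and sign-changing. For a solution, testing the equation against $u_m$ gives $\int\rho(b|\varphi_m'|^2+\lambda\varphi_m^2) = \lambda\int\rho|\varphi_m|^{q+1}$, whence $J(\varphi_m) = \lambda(\tfrac12 - \tfrac1{q+1})\int\rho|\varphi_m|^{q+1} = \text{const}\cdot E(u_m)$; since $q>1$ the constant is positive, so $c_m\to\infty$ forces $E(u_m)\to\infty$, and in particular there are infinitely many \emph{distinct} solutions. It remains to ensure infinitely many of them are \emph{sign-changing}. Here one observes that the only possible non-sign-changing $f$-invariant solutions are the constant solution $u\equiv 0$ and, when $\lambda$ is compatible, the constant $u\equiv 1$ (and $u\equiv -1$); more generally any positive $f$-invariant solution is, by the maximum principle and the structure of (\ref{ode}), uniquely determined and has bounded energy. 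Thus at most finitely many of the $u_m$ fail to change sign, and discarding them leaves infinitely many sign-changing solutions with arbitrarily large energy.

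I expect the main obstacle to be the analysis at the focal submanifolds: one must verify carefully that the formal ODE reduction corresponds to an honest isomorphism between $f$-invariant $H^1(M)$ functions and the weighted Sobolev space on $(t_0,t_1)$, that the endpoint behaviour of $b$ and $\rho$ is exactly as claimed (this uses the structure theory of isoparametric functions and the tube formula around $M_0,M_1$, with the dimension $d_i$ controlling the weight exponent), and that the resulting weighted embedding into $L^{q+1}$ is compact precisely for $q<q^\ast$. Controlling regularity of the weak solutions across the focal set, so that $u_m$ is a classical solution on all of $M$, is the other delicate point; this should follow from standard interior elliptic estimates on $M$ once weak solvability is known, since $f$-invariance is an a priori symmetry and not an extra constraint to be recovered.
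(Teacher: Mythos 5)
Your overall strategy is sound and is essentially the paper's: restrict the functional to $f$-invariant $H^1$ functions, use the compact embedding into $L^{q+1}$ governed by the effective dimension $n-k$ at the focal submanifolds, and apply the symmetric mountain pass theorem to get an unbounded sequence of critical values, which by the Nehari identity forces $E(u_m)\to\infty$. The differences are mostly presentational: you route everything through the weighted one-dimensional ODE on $[t_0,t_1]$, whereas the paper stays on $M$ and proves the Sobolev inequality directly from the disk-bundle decomposition of $M$ over $M_0$ and $M_1$ (reducing to the Euclidean Sobolev inequality on normal disks of dimension $n-d_i$ — the same ``effective dimension'' mechanism you identify). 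Also, you quote the unboundedness of the min-max values as part of the symmetric mountain pass theorem, while the paper reproves it explicitly via the quantities $d_m=\inf\{(\int uP(u))^{1/2}: u\in N\cap E_m^\perp\}$ and a comparison $c_{m+1}\ge k d_m^2$; your citation is legitimate (this is the standard conclusion of the genus/odd-homeomorphism versions of the theorem), but be aware that this is exactly the nontrivial content the paper chooses to supply.

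Two soft spots deserve attention. First, the step from ``$\varphi_m$ is a critical point of $J$ on the invariant/weighted space'' to ``$u_m$ is a weak solution of (\ref{YT}) on all of $M$'' is not automatic: $f$-invariance is in general \emph{not} the fixed-point set of an isometry group action, so Palais's symmetric criticality principle does not apply off the shelf. The paper handles this with a short direct argument: for $u=\varphi\circ f$ of class $C^2$ one has $\Delta u=(\varphi''b+\varphi'a)\circ f\in S_f$, and likewise $u|u|^{q-1}\in S_f$, so $DI(u)[v]=0$ for every $v$ in the orthogonal complement of the invariant subspace; you should include something to this effect rather than only flagging it as a delicate point. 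Second, your justification for discarding non-sign-changing solutions is wrong as stated: positive $f$-invariant solutions are \emph{not} ``uniquely determined'' in general (there can be many non-constant ones). The correct mechanism — and what the paper uses for its Corollary — is a uniform a priori $C^0$ bound for positive $f$-invariant solutions when $q<q^*$ (cited from \cite[Proposition 4.1]{BJP}), which bounds their energy and hence shows only finitely many $u_m$ can fail to change sign. Note, however, that Theorem 1.1 itself claims only infinitely many solutions of arbitrarily large energy, so this last issue affects only the sign-changing refinement, not the statement you were asked to prove.
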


Note that since $p_n = \frac{n+2}{n-2} < \frac{n-k+2}{n-k-2}$, this result applies to critical and supercritical Yamabe-type 
equations. 
It was proved in \cite[Proposition 4.1]{BJP} that there is a $C^0$-bound for positive $f$-invariant solutions to the
Equation (\ref{YT}) when $q<q^*$.  It then follows that only a  finite number of the solutions given by
Theorem 1.1 can be positive solutions, and we have:

\begin{corollary}Consider a proper isoparametric function $f$ on a closed Riemannian manifold $(M^n,g)$. 
Assume that the minimum dimension of level sets of $f$, $k$, is strictly positive. Assume
that $1<q<\frac{n-k+2}{n-k-2}$. Then
there exist infinite nodal solutions of Equation (\ref{YT}).
\end{corollary}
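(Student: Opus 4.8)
The plan is to reduce Equation (\ref{YT}) for $f$-invariant functions $u=\varphi\circ f$ to an ordinary differential equation for $\varphi$ on the interval $[t_0,t_1]$, and then produce infinitely many solutions of that ODE with arbitrarily large energy by a variational argument on a suitably weighted Sobolev space, exploiting a symmetric mountain-pass / Fountain-type theorem. The key point that makes the supercritical exponent tractable is that the $f$-invariant reduction introduces a weight $J(t)=|\nabla f|^{-1}(t)\,\mathrm{vol}(f^{-1}(t))$ (the coarea factor) which degenerates like $\mathrm{dist}(t,\{t_0,t_1\})^{\min(d_0,d_1)}=:\rho(t)^k$ near the focal values, and this degeneracy improves the relevant Sobolev embedding exactly up to the exponent $q^*=\frac{n-k+2}{n-k-2}$.

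First I would write, for $u=\varphi\circ f$, $\Delta_g u = b(f)\,\varphi''(f) + a(f)\,\varphi'(f)$, so (\ref{YT}) becomes the ODE
\begin{equation*}
-b(t)\varphi'' - a(t)\varphi' + \lambda\varphi = \lambda|\varphi|^{q-1}\varphi,\qquad t\in(t_0,t_1).
\end{equation*}
Multiplying by the integrating factor, this is the Euler--Lagrange equation of the functional
\begin{equation*}
\mathcal{F}(\varphi) = \frac12\int_{t_0}^{t_1}\bigl(|\nabla f|^2(\varphi')^2 + \lambda\varphi^2\bigr)J\,dt - \frac{\lambda}{q+1}\int_{t_0}^{t_1}|\varphi|^{q+1}J\,dt
\end{equation*}
on the space $H$ of $f$-invariant $H^1(M,g)$ functions, which coincides with the weighted Sobolev space of $\varphi$ with $\int(|\nabla f|^2(\varphi')^2+\varphi^2)J\,dt<\infty$; indeed $\mathcal{F}$ is just the restriction of the standard energy functional for (\ref{YT}) to $H$, and by the principle of symmetric criticality its critical points are genuine (weak, hence classical by elliptic regularity) solutions of (\ref{YT}).

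Next I would establish the compactness input: the embedding $H\hookrightarrow L^{q+1}(M)$ restricted to $f$-invariant functions is compact for every $q+1<q^*+1$. This is where the weight's vanishing order $k$ at the endpoints is used — it is morally a one-dimensional weighted Hardy--Sobolev / Caffarelli--Kohn--Nirenberg statement, and I expect this to be the main technical obstacle, both in pinning down the precise behaviour of $b=|\nabla f|^2$ and $J$ near $t_0,t_1$ (decay rates governed by the tube structure around the focal submanifolds $M_0,M_1$) and in verifying the resulting compact embedding up to the sharp exponent $q^*$. Granting this, $\mathcal{F}$ satisfies the Palais--Smale condition on $H$: PS sequences are bounded (standard, since $q+1>2$ and one combines $\mathcal{F}(\varphi_m)$ with $\langle\mathcal{F}'(\varphi_m),\varphi_m\rangle$), and bounded sequences have convergent subsequences by the compact embedding.

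Finally I would invoke the symmetric mountain-pass theorem (Ambrosetti--Rabinowitz) or the Fountain Theorem: $\mathcal{F}$ is even, $\mathcal{F}(0)=0$, it has the local linking geometry at the origin (the quadratic part dominates for small $\|\varphi\|$), and on every finite-dimensional subspace it is bounded above and coercive-to-$-\infty$ since the $L^{q+1}$ term wins; together with PS this yields an unbounded sequence of critical values $c_1\le c_2\le\cdots\to\infty$. Since the critical value is comparable to the energy $E(u)=\int_M|u|^{q+1}dv_g$ of the corresponding solution (at a critical point $\mathcal{F}(\varphi)=\bigl(\frac12-\frac1{q+1}\bigr)\lambda E(u)$), we obtain infinitely many $f$-invariant solutions with arbitrarily large energy, proving Theorem 1.1. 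The Corollary then follows immediately: by \cite[Proposition 4.1]{BJP} the positive $f$-invariant solutions have a uniform $C^0$ bound when $q<q^*$, hence a uniform energy bound, so all but finitely many of the solutions produced above must be sign-changing.
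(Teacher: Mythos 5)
Your overall architecture is reasonable and your final step --- deducing the Corollary from Theorem 1.1 via the $C^0$ bound of \cite[Proposition 4.1]{BJP} for positive $f$-invariant solutions --- is exactly the paper's argument. But the route you take to Theorem 1.1 has a genuine gap precisely at the step you yourself identify as the crux: the compact embedding of the $f$-invariant Sobolev space into $L^{q+1}$ for $q<q^*$. You do not prove it, and the heuristic you offer for it is quantitatively wrong. The coarea weight $J(t)=\mathrm{vol}(f^{-1}(t))/|\nabla f|$ near the focal value $t_i$ measures the volume of the boundary of a tube of radius $r$ about the $d_i$-dimensional submanifold $M_i$, so (in arc-length parametrization) it vanishes like $r^{\,n-d_i-1}$, not like $r^{\,k}$ as you claim. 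The exponent $n-d_i-1$ is what makes $f$-invariant functions behave like radial functions in an $(n-d_i)$-dimensional disk and hence yields the critical exponent $\frac{(n-d_i)+2}{(n-d_i)-2}$, whose worst case over $i$ is $q^*=\frac{n-k+2}{n-k-2}$; a weight of order $r^k$ would instead give effective dimension $k+1$ and the wrong critical exponent. Since the improved embedding is the entire reason the supercritical range is accessible, this cannot be left as an ``expected obstacle.'' The paper sidesteps the one-dimensional weighted analysis altogether: it decomposes $M$ as two normal disk bundles over $M_0$ and $M_1$, observes that an $f$-invariant function depends only on the radial coordinate of the $(n-d_i)$-dimensional fiber, and applies the ordinary Euclidean Sobolev inequality on $D^{n-d_i}(a)$ chart by chart (Lemma 2.3), with compactness quoted from \cite[Section 6]{Henry}.

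Two smaller points. First, your appeal to the principle of symmetric criticality is not directly available: the level sets of a general proper isoparametric function need not be orbits of an isometry group, so Palais' theorem does not apply as stated. The paper replaces it by the elementary observation that $\Delta$ maps $f$-invariant $C^2$ functions to $f$-invariant functions, whence $DI(u)[v]=0$ for every $v$ in the $L^2$-orthogonal complement of $L^2_{1,f}(M)$, and a critical point of the restricted functional is a genuine weak (hence, by elliptic regularity, strong) solution. Second, the assertion that the symmetric mountain-pass theorem ``yields an unbounded sequence of critical values'' is not automatic from \cite[Theorem 2.13]{Ambrosetti}; the paper spends the second half of Section 3 proving $c_m\to\infty$ by introducing the quantities $d_m=\inf\{(\int_M uP(u))^{1/2}: u\in N\cap E_m^\perp\}$ on the Nehari set $N$, showing $d_m\to\infty$ via the compact embedding, and constructing odd homeomorphisms $h_m\in\Gamma^*$ with $\inf_{E_m^\perp\cap\partial B(1)}I(h_m(u))\geq k d_m^2$. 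Invoking Bartsch's Fountain Theorem would legitimately shortcut this, but its hypotheses again rest on the compact embedding you have not established.
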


Our result is similar to the result obtained by J. V\'etois in \cite[Corollary 1.1]{Veto}. Indeed, J. V\'etois guarantees 
the existence of large-energy solutions to the Schr\"odinger-Yamabe type equation

$$-\Delta_gu+hu=|u|^{p_n-1}u,$$
where $h$ is a H\"older continuous function, assuming that $h<\frac{n-2}{4(n-1)}\textbf{s}_g$.

We will prove Theorem 1.1 by applying the classical theorem of A. Ambrosetti and P. Rabinowitz 
about the existence of mountain pass critical points  \cite[Theorem 2.13]{Ambrosetti}. This will be carried out in
Section 3. In Section 2 we will recall some information about isoparametric functions and give some preliminary 
results.

\section{Preliminaries on isoparametric functions}

Let $(M^n,g)$ be a closed connected Riemannian manifold. Assume that $M$ admits a proper isoparametric function 
$f:M\rightarrow [ t_0 , t_1 ]$. 
The general theory of isoparametric functions on Riemannian manifolds was introduced by Q-M Wang in
\cite{Wang} following the classical theory, which considered the case of space forms. Recall that the fact that $f$ is an 
isoparametric function means that
there are smooth functions $a$ and $b$ such that $\Delta (f) = a \circ f $ and $\| \nabla f \|^2 = b \circ f$. 
It is proved in \cite{Wang} that the only critical values of $f$ are its minimum and maximum. The critical
level sets, $M_0 = f^{-1} (t_0 )$ and $M_1 = f^{-1} (t_1 )$, are called the {\it focal submanifolds}
of $f$, and it was also
proved in \cite{Wang} that they are actually smooth submanifolds. The regular level sets of $f$, $f^{-1}(t)$  for $t\in
(t_0 , t_1 )$, are called {\it isoparametric hypersurfaces}. We let $d_i$ be the dimension of the focal submanifold  $M_i$
and $k = \min \{ d_0 , d_1 \}$. We will assume  later that 
$k>0$. J.  Ge and  Z.  Tang proved in \cite{Tang} that if $k \leq n-2$ then all the level sets of $f$ are
connected. In this case the isoparametric function $f$ is called {\it proper}.

We consider on $M$  the (normalized) Yamabe-type equation 
\begin{equation}\label{NYT} 
-\Delta_gu +\lambda  u= |u|^{q-1}u,
\end{equation}

where $1<q<(n-k+2)/(n-k-2)$ and $\lambda$ is a positive constant.

In this note we will find solutions of the Yamabe-type equation in the space of $f$-invariant functions.

\begin{definition}
Given an isoparametric function $f$ on the closed manifold $(M^n,g)$ we denote by
$S_f$ the space of $f$-invariant functions on $M$, $S_f = \{ \varphi \circ f : \varphi: [t_0, t_1 ]
\rightarrow \mathbb{R} \}$.
\end{definition}

Let $L^2_1 (M)$ denote the usual Sobolev space of $L^2$-functions on $M$ which admit one derivative also in $L^2(M)$. 
Then  we let $L^2_{1,f}(M) = L^2_1 (M) \cap S_f$ be the space of $L^2_1$-functions on $M$ which are 
$f$-invariant. 

Consider  the functional $I:L^2_{1}(M)\rightarrow\mathbb{R}$ defined as
$$I(u)=\frac{1}{2}\int_{M}\left(|\nabla u|^2+ \lambda  u^2\right)dv_g-  \frac{1}{q+1}\int_M |u|^{q+1}dv_g. $$

Critical points $u$ of the functional $I$ are weak solutions of Equation (\ref{NYT}): they verify

$$D I (u) [v] = \int_{M}(\langle \nabla u , \nabla v \rangle +\lambda u  v ) \ dv_g-   \int_M  u|u|^{q-1} v \ dv_g =0,$$

\noindent
for all $v \in L^1_2 (M)$.

It is important to point out that $L^2_{1,f} (M)$ is a closed subset of $L^2_1 (M)$. Also note that 
if $u = \varphi \circ f \in S_f$ is a $C^2$-function, then $\Delta u = \Delta (\varphi \circ f) = \left(  \varphi ''  b + \varphi '   a 
\right) \circ f
\in S_f$. 

Let $H$ be the $L^2$-orthogonal complement of $L^2_{1,f} (M)$ in $L^2_1 (M)$. Then for any $v\in H$ and any
$u\in L^2_{1,f} (M) \cap C^2 (M)$ we have that 

$$ \int_{M}\langle \nabla u , \nabla v \rangle dv_g =  - \int_{M}\ (\Delta u )v \ dv_g =0 .$$ 

Since $L^2_{1,f} (M) \cap C^2 (M)$ is dense in $L^2_{1,f} (M)$ it follows that for any $u \in L^2_{1,f} (M)$ and
any $v\in H$ we have that 

$$D I (u) [v] = \int_{M}(\langle \nabla u , \nabla v \rangle +\lambda uv ) \ dv_g-  \int_M  u|u|^{q-1} v \ dv_g =0.$$

Therefore we have:

\begin{lemma}
If $u\in L^2_{1,f} (M)$ is a critical point of the restriction of $I$ of $L^2_{1,f} (M)$, $I: L^2_{1,f} (M) \rightarrow \mathbb{R}$, 
then $u$ is a strong solution of Equation (\ref{NYT}).
\end{lemma}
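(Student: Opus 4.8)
The plan is to notice that the computation displayed just before the statement already produces a weak solution, and that the real work is a regularity upgrade. Since $u$ is a critical point of $I|_{L^2_{1,f}(M)}$, we have $DI(u)[v]=0$ for every $v\in L^2_{1,f}(M)$; and the integration-by-parts identity established above, together with the density of $L^2_{1,f}(M)\cap C^2(M)$ in $L^2_{1,f}(M)$, gives $DI(u)[v]=0$ for every $v$ in the $L^2$-orthogonal complement $H$. Because $L^2_1(M)=L^2_{1,f}(M)\oplus H$, linearity yields $DI(u)[v]=0$ for all $v\in L^2_1(M)$, i.e.\ $u$ is a weak solution of Equation (\ref{NYT}). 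So the content of the lemma is to promote this weak solution to a strong (indeed classical) one.

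For the regularity step I would exploit the $f$-invariance. Writing $u=\varphi\circ f$ and applying the coarea formula, the weak equation for $u$ becomes the weak form of a singular Sturm--Liouville problem for $\varphi$ on $[t_0,t_1]$, schematically $-(w\,\varphi')'=(|\varphi|^{q-1}\varphi-\lambda\varphi)\,w$, with weight $w(t)=\sqrt{b(t)}\,\mathrm{vol}_g(f^{-1}(t))$, which is smooth and strictly positive on the open interval $(t_0,t_1)$ and degenerates only at the two focal values. On $(t_0,t_1)$ the equation is nondegenerate, so a direct ODE bootstrap gives $\varphi\in C^\infty(t_0,t_1)$. To reach the focal submanifolds I would use that, near a focal submanifold of dimension $k$, an $f$-invariant $L^2_1$ function is exactly a radial $L^2_1$ function on a normal slice of dimension $n-k$, so the pertinent Sobolev exponent is $2(n-k)/(n-k-2)$; since the standing hypothesis $1<q<\frac{n-k+2}{n-k-2}$ is equivalent to $q+1<\frac{2(n-k)}{n-k-2}$, the nonlinearity is subcritical in this effective dimension. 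A Moser iteration performed in the $f$-invariant class (iterating with $|u|$) then gives $u\in L^\infty(M)$. Once $u$ is bounded, the right-hand side $\lambda u-|u|^{q-1}u$ lies in $L^\infty(M)$, and classical $L^p$ and Schauder elliptic estimates on $(M^n,g)$ yield $u\in C^{2,\alpha}(M)$ --- hence a strong, in fact classical, solution --- after which a standard bootstrap gives $u\in C^\infty(M)$.

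The main obstacle is precisely the possible supercriticality of $q$ with respect to the ambient dimension $n$: for a general weak solution there is no bootstrap available and no reason for boundedness. The resolution is to use $f$-invariance to descend to effective dimension $n-k$, where the assumption $q<\frac{n-k+2}{n-k-2}$ is exactly the subcritical range, and to combine this with the mild degeneracy of the reduced problem at the focal values, which is controlled by the known asymptotics of $b$ near the focal submanifolds together with the Neumann-type condition $\varphi'(t_0)=\varphi'(t_1)=0$ automatically satisfied by smooth $f$-invariant functions on $M$. A secondary technical point to watch is making the passage $u\leftrightarrow\varphi$ rigorous at the level of weighted Sobolev spaces, but this is routine given the structure of isoparametric functions recalled in this section.
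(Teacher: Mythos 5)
Your first paragraph reproduces exactly the paper's argument: the orthogonal decomposition $L^2_1(M)=L^2_{1,f}(M)\oplus H$ together with the integration-by-parts identity and the density of $L^2_{1,f}(M)\cap C^2(M)$ shows that a critical point of the restriction is a critical point of $I$ on all of $L^2_1(M)$, hence a weak solution. For the regularity upgrade, however, the paper simply invokes ``classical elliptic regularity,'' whereas you correctly flag that this is not automatic here: for $q$ above the ambient critical exponent $\frac{n+2}{n-2}$ the standard $L^p$ bootstrap on $(M^n,g)$ does not close starting from $u\in L^{2n/(n-2)}$, and even the improved integrability $u\in L^{r}$ for $r<\frac{2(n-k)}{n-k-2}$ coming from the invariant Sobolev embedding is not by itself enough when $q$ is close to $q^*$. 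Your resolution --- work entirely in the $f$-invariant class, where near each focal submanifold $u$ is a radial function on a normal slice of dimension at most $n-k$, run Moser iteration against the invariant Sobolev inequality of Lemma 2.3 (for which $q+1<\frac{2(n-k)}{n-k-2}$ is exactly the subcritical condition) to get $u\in L^\infty(M)$, and only then apply $L^p$ and Schauder theory --- is a sound and genuinely more complete route to the same conclusion; it is essentially the mechanism that justifies the paper's one-line citation of elliptic regularity. Two small points to tidy if you write this out: the weight in the reduced Sturm--Liouville problem produced by the coarea formula is $\mathrm{vol}\bigl(f^{-1}(t)\bigr)/\sqrt{b(t)}$ rather than $\sqrt{b(t)}\,\mathrm{vol}\bigl(f^{-1}(t)\bigr)$ (this changes nothing structurally), and the Neumann-type condition at the focal values is cleanest after reparametrizing $f$ by the normal distance, since $b$ vanishes at $t_0$ and $t_1$.
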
 

The previous comments show that if $u\in L^2_{1,f} (M)$ is a critical point of the restriction of $I$ of $L^2_{1,f} (M)$, then
$u$ is a critical point in $L^2_1 (M)$, and therefore a weak solution of Equation (\ref{NYT}). Then it follows by
classical elliptic regularity that $u$ is a strong solution.

\medskip

It was proved in \cite[Section 6]{Henry} that if $1<q<(n-k+2)/(n-k-2)$ then $L^2_{1,f} (M)\subset L_f^{q+1}(M)    $
and moreover the inclusion is compact. The following lemma is part of the previous statement, we include a short proof for
completeness since we will need it in the next section:

\begin{lemma} Assume that  $1<q<(n-k+2)/(n-k-2)$. There exists a positive constant $C$ such that 
if $u\in  L^2_{1,f} (M)$, then 

$$|u|_{L^{q+1}(M)}\leq C |u|_{L^2_1(M)}.$$

\end{lemma}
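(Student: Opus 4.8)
The plan is to use the standard Sobolev embedding on $M$ together with the structure of $f$-invariant functions: a function $u = \varphi\circ f \in L^2_{1,f}(M)$ descends to a function on the interval $[t_0,t_1]$, and the claim amounts to a one-dimensional weighted Sobolev inequality where the weight records how the volume of the level sets $f^{-1}(t)$ degenerates near the focal submanifolds. First I would recall from Wang's theory that the volume element of $M$, written in terms of $f$ and the induced measure on the level sets, gives $dv_g = J(t)\,dt\,d\sigma_t$ where $J(t)$ vanishes at $t_0$ and $t_1$ to orders governed by the codimensions $n-1-d_0$ and $n-1-d_1$ of the focal submanifolds inside a tube; the key point is that near the focal manifold of dimension $k$ one has $J(t)\sim \mathrm{dist}(t,t_i)^{n-1-k}$. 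This reduces the inequality
$$\int_{t_0}^{t_1} |\varphi|^{q+1} J\,dt \le C\Bigl(\int_{t_0}^{t_1}(|\varphi'|^2 + \lambda\varphi^2)J\,dt\Bigr)^{(q+1)/2}$$
to a one-dimensional Bliss/Hardy-type inequality with the given weight.

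The cleanest route, and the one I would actually take, is to avoid re-deriving the weighted inequality by hand and instead reduce directly to the classical Sobolev embedding on an auxiliary space. Concretely, for a model case one can think of $f^{-1}([t_0,t_1])$ near the minimal-dimensional focal submanifold as modeled on $\mathbb{R}^{n-k}$ (or a ball therein): an $f$-invariant function on $M$ corresponds to a radial function, and $L^2_{1,f}(M)$ embeds into $L^2_1$ of an $(n-k)$-dimensional domain, for which $q+1 < \frac{2(n-k)}{n-k-2}$ is precisely the subcritical Sobolev exponent. So the steps are: (i) decompose $[t_0,t_1]$ into a neighbourhood of each focal value and a compact middle region; (ii) on the middle region the weight $J$ is bounded above and below, so the inequality is the usual one-dimensional Sobolev inequality on an interval; (iii) near each focal value $t_i$, change variables so that the weight $J(t)\,dt$ matches $r^{n-1-d_i}\,dr$ and identify $\varphi$ with a radial function on a ball $B\subset\mathbb{R}^{d_i+1}$; then $\int |\varphi|^{q+1}\,J\,dt \lesssim \|\varphi\|_{L^{q+1}(B)}^{q+1} \lesssim \|\varphi\|_{L^2_1(B)}^{q+1} \lesssim \bigl(\int(|\varphi'|^2+\lambda\varphi^2)J\,dt\bigr)^{(q+1)/2}$, using $q+1 < \frac{2(d_i+1)}{d_i-1} \le \frac{2(n-k+?)}{\dots}$ — here one checks the exponent condition reduces exactly to $q < q^* = \frac{n-k+2}{n-k-2}$ because $d_i \ge k$ for both $i$, and the worst (largest) critical exponent comes from $d_i = k$; and (iv) sum the finitely many pieces.

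Alternatively — and this may be the shortest writeup — one simply invokes the result of \cite[Section 6]{Henry}, which already establishes the compact inclusion $L^2_{1,f}(M)\hookrightarrow L^{q+1}_f(M)$ for $1<q<(n-k+2)/(n-k-2)$; the continuity of that inclusion is exactly the asserted inequality, so the lemma follows immediately with $C$ the operator norm. Since the statement explicitly says it is "part of the previous statement," I would present the self-contained argument above as the "short proof for completeness" and keep it to the essentials: state the volume decomposition with its degeneracy orders, reduce to the Euclidean radial Sobolev inequality, and verify the exponent bookkeeping.

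The main obstacle, and the only genuinely non-formal point, is step (iii): getting the correct vanishing order of the volume density $J$ at the focal values and matching it to a Euclidean radial measure. This requires knowing that in a tubular neighbourhood of $M_i$ the level sets $f^{-1}(t)$ are the distance spheres of radius $r = r(t)$ with $r'(t)\ne 0$, so that $J(t)\,dt$ is comparable to $r^{n-1-d_i}\,dr$ — a fact from the isoparametric structure (the shape operator of the level sets has bounded eigenvalues away from $0$ and $t_1-t_0$, and near a focal point the geometry is that of a sphere bundle). Once that geometric input is in hand, everything else is the classical Sobolev inequality plus elementary exponent arithmetic showing $k = \min\{d_0,d_1\}$ is what controls the critical threshold.
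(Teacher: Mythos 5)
Your overall strategy is the same as the paper's: decompose $M$ into the two normal disk bundles over the focal submanifolds, observe that an $f$-invariant function depends only on the normal (radial) coordinate there, and reduce to a Euclidean Sobolev inequality for radial functions, with the middle region being harmless because the volume density is bounded above and below. The paper implements exactly this via charts $U_j\cong U_j'\times D_0^{n-d_i}(a)$ and the Sobolev inequality on the normal disk.

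However, your step (iii) contains a concrete error in the dimensional bookkeeping, and it occurs at precisely the point you yourself identify as the only non-formal one. You correctly record that the volume density near $M_i$ behaves like $r^{n-1-d_i}$; but $r^{n-1-d_i}\,dr$ is the radial measure of $\mathbb{R}^{n-d_i}$ (the normal disk to $M_i$ has dimension equal to the codimension $n-d_i$), not of $\mathbb{R}^{d_i+1}$ as you write. Your own two assertions are mutually inconsistent: a radial function on a ball in $\mathbb{R}^{d_i+1}$ lives against the measure $r^{d_i}\,dr$, which matches $r^{n-1-d_i}\,dr$ only when $d_i=(n-1)/2$. Consequently the exponent condition you invoke, $q+1<\frac{2(d_i+1)}{d_i-1}$, is not the relevant one and does not reduce to $q<\frac{n-k+2}{n-k-2}$ (your ``$?$'' in the exponent is a symptom of this). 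The correct statement is that near $M_i$ one needs $q+1<\frac{2(n-d_i)}{n-d_i-2}$, i.e.\ $q<\frac{(n-d_i)+2}{(n-d_i)-2}$; since $x\mapsto\frac{x+2}{x-2}$ is decreasing for $x>2$ and $d_i\geq k$ gives $n-d_i\leq n-k$, the binding constraint comes from the focal submanifold with $d_i=k$ and is exactly $q<q^*=\frac{n-k+2}{n-k-2}$. With that substitution ($\mathbb{R}^{n-d_i}$ in place of $\mathbb{R}^{d_i+1}$) your argument goes through and coincides with the paper's proof; as written, the exponent verification fails. Your fallback of simply citing the continuity of the embedding from Henry's Section~6 is legitimate but is precisely what the lemma sets out to reprove.
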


\begin{proof}
Since the focal submanifolds $M_0,M_1$ of the function $f$ have positive dimensions ($d_0,d_1$, respectively), 
it is well-known that $M^n$ can be identified as a union of two disk bundles $D_0, D_1$, each one over $M_0$ and $M_1$ respectively. More precisely,  for $i\in\{0,1\}$ let $exp_{M_i}$ be the normal exponential map of $M_i$ in $M$. Following the notation
used by R. Miyaoka  in \cite{Miyaoka} the manifold $M^n$ is diffeomorphic to 
$$N_{\leq a}M_0\cup N_{\leq a}M_1,$$
where $$N_{\leq a}Q=\{exp_{Q}(s\eta)/\quad|\eta|=1, s<a\}\cup \{\exp_{Q}(a\eta)/\quad|\eta|=1\},$$
and $2a=d_g(M_0,M_1)$.

From the  definition of disk bundle, we can choose a coordinate system $(U_j,\varphi_j)$ on $N_{\leq a}M_0$ (and similar ones  on $N_{\leq a}M_1$) such that  $U_j=\pi^{-1}(U'_j)$ for a finite cover $\{U'_j\}$ of $M_0$ and each $\varphi_j$ is a diffeomorphism defined  by $$\varphi_j:U_j\rightarrow U'_j\times D^{n-m_0}_0(a),$$ where $D^{n-m_0}_0(a)$ is the disk of radius $a$ in the normal bundle of $M_0$.

Without loss of generality, we cover $M^n$ by a finite number $m$ of these type of charts with a uniform bound for the metric tensor $g$ i.e. there exists a constant $c>1$ with $$c^{-1} \ Id \leq g^l_{ij}\leq c \ Id\quad\text{for}\quad l\in\{1,\cdots,m\}.$$

Let $s=n-d_0$.

For any $f$-invariant function  $u$ we have that $u$ only depends on the  normal coordinates on $N_{\leq a}M_0$ and $N_{\leq a}M_1$. In particular, using the previous charts $(U_j,\varphi_j)$ on $N_{\leq a}M_0$, the function $u$ only depends on $D^s_0(a)$ (similarly, $u$ only depends on $D_0^{n-d_1}(a)$ in $N_{\leq a}M_1$).

For some positive constants $k_0$ and $k_1$ we have:
\begin{eqnarray*}
\int_{U_l} u^{q+1} dv_g&=\int\limits_{U'_l\times D_0^s(a)}u^{q+1}\sqrt{det g^l_{ij}}dxdy\\
&=k_0 \int\limits_{D_0^s(a)}u^{q+1}\sqrt{det g^l_{ij}}dy\\
&\leq k_1\int\limits_{D_0^s(a)}u^{q+1}dy.
\end{eqnarray*}
Therefore for some positive constant $k_2$ we have: $$|u|_{L^{q+1}(U_l)}\leq k_2|u|_{L^{q+1}(D_0^s(a) )}.$$
Since $g^l_{ij}$ is also bounded from below, with a similar argument we can get that:
$$|u|_{L^2_1(U_l)}\geq k_3 |u|_{L^2_1(D_0^s(a))},$$
for some positive constant $k_3$.

Applying the usual Sobolev inequalities in Euclidean space  $D_0^s(a)\subset\mathbb{R}^{n-m_0}$ we have
that for some positive constant $k_4$ we have:   $$|u|_{L^{q+1}(D_0^s(a)) }\leq k_4 |u|_{L^2_1(D_0^s(a))},$$
and therefore, $$|u|_{L^{q+1}(U_l)}\leq C(l) |u|_{L^2_1(U_l)}. $$

Then the inequality in the lemma follows by taking a partition of unity on $M$ subordinate to $\{U_l\}_l$ and adding up.

\end{proof}

\section{Mountain pass critical points}

In this section we will give the proof of Theorem 1.1.
We will apply the classical min-max method of Ambrosetti-Rabinowitz to the functional $I:L^2_{1,f}(M)\rightarrow\mathbb{R}$ defined as
$$I(u)=\frac{1}{2}\int_{M}\left(|\nabla u|^2+\lambda u^2\right)dv_g-\frac{1}{q+1}\int_M |u|^{q+1}dv_g. $$

We briefly recall the min-max Theorem 2.13 in \cite{Ambrosetti}:

Let $B(\rho)$ be the open ball of center at $0$  and radius $\rho>0$ in the space $L_{1,f}^2(M)$ and 
$$\Gamma^*=\{h:L_{1,f}^2(M)\rightarrow L_{1,f}^2(M)/ \;I(h(B(1))\geq 0\;,\;h\;\text{is an odd  homeomorphism}\}.$$

If  the functional $I$ satisfies the conditions
\begin{enumerate}
\item \label{c1}$I(0)=0$ and there exist $\rho, \delta >0$ such that $I>0 $ in $B(\rho)-\{0\}$ and $I\geq \delta $ on $\partial B(\rho);$
\item \label{c2} the Palais-Smale condition: if $u_m$ is a sequence in $L^2_{1,f}(M)$ such that $I(u_m )$ is bounded and $DI (u_m ) \rightarrow 0$, then it has a convergent subsequence;
\item \label{c3} $I(u)=I(-u)$ for any $u \in L^2_{1,f}(M)$, and for any finite dimensional subspace $E$ in $L_{1,f}^2(M)$, $E\cap \{I\geq0\}$ is bounded.

\end{enumerate}
Then the content of Theorem 2.13 in \cite{Ambrosetti} is that  given a sequence $E_m$ of subspaces of $E$,  $E_m \subset
E_{m+1}$  ($m$ = dim($E_m$)), there exists an increasing sequence of critical values $c_m$ of $I$, given by 
$$c_m=\sup\limits_{h\in\Gamma^*}\inf\limits_{u\in E_{m-1}^{\perp}\cap\; \partial B(1)}I(h(u)).$$

\medskip

First we will verify that $I$ in fact satisfies conditions  \ref{c1}, \ref{c2} and \ref{c3}. 

Indeed,

\medskip

-Condition \ref{c1}:

Note that since  $\lambda>0$, the operator $P:=-\Delta +\lambda$ is coercive, and therefore there exists a positive constant $C_1$ such that 
for any function $u\in L_{1,f}^2 (M)$ we have:
$$\int_{M} u P(u)dv_g\geq C_1 |u|^2_{L_{1}^2(M)}.$$

Moreover, by Lemma 2.3 there exists a positive constant $C_2$ such that for any function $u\in L_{1,f}^2 (M)$ we have:
$$\int_{M} u^{q+1}dv_g\leq C_2 |u|^{q+1}_{L^2_{1}(M)}.$$

Hence $$I(u)\geq C_3 |u|^2_{L_{1}^2(M)}- C_4 |u|^{q+1}_{L^2_{1}(M)},$$
for positive constants $C_3 ,C_4 $.

Consider a function $u \in L^2_{1,f} (M)$ with $|u|_{L^2_1(M)}=1$. It follows from the previous inequality that for any $t>0$, 
$I(tu) \geq t^2 C_3 - t^{q+1} C_4$. Then we can deduce that the  assumption that $q+1>2$ implies the existence of a small $\rho>0$ such that the functional $I$ satisfies  Condition \ref{c1}.

\medskip

-Condition \ref{c2}: 

Consider a Palais-Smale sequence $u_m$  in $L^2_{1,f}(M)$ for the functional $I$. Then it follows that the sequence is
bounded in $L^2_{1,f}(M)$, and therefore it has a subsequence which is weakly convergent. Then by
the compactness of the embedding of  $L^2_{1,f}(M)$ in $ L_f^{q+1}(M)$ \cite[Lemma 6.1]{Henry} we have  a convergent subsequence.

\medskip

-Condition \ref{c3}:

Let $E_m$ be a subspace of $L^2_{1,f}(M)$ of dimension $m$. There is a positive  constant $C$ 
so that for all $u$ in $E_m$ we have  $$\int_{M} uP(u)dv_g\leq C|u|^{q+1}_{L^{q+1}(M)}.$$

Then for any  $u \in E_m$ with $\int_M uP(u)dv_g=1$ and for $t>0$ we have $$I(tu)\leq \frac{t^2}{2}-\frac{t^{q+1}}{(q+1)C}.$$

Thus $E_m\cap\{I\geq0\}$ is bounded.

\bigskip

Therefore we can apply Theorem 2.13 in \cite{Ambrosetti}  to obtain that there exists a sequence 
$u_m\in L^2_{1,f}$ of  critical points of the functional $I:L^2_{1,f}(M)\rightarrow\mathbb{R}$,
associated to the mountain pass level $c_m$. The
critical points are strong solutions of the Yamabe-type equation (\ref{YT}) by Lemma 2.2.

\medskip

It follows that to conclude the proof of Theorem 1.1 we only need to prove that
the solutions $u_m$ have large energy, i. e. that the increasing sequence $c_m$ goes to infinity.

Let $\{ e_i \}_{i\geq 1}$ be an orthonormal basis of $ L^2_{1,f}(M)$ and $E_m = \langle e_1 , ...,e_m \rangle$.

Let $$N=\left\lbrace u\in L^2_{1,f}(M)-\{0\}\;/\quad \frac{1}{2}\int_{M} uP(u)dv_g=\frac{1}{q+1}\int_{M} |u|^{q+1}dv_g \right\rbrace,$$
and consider 
$$d_m=\inf\left\lbrace \left(\int_{M} uP(u)dv_g\right)^{1/2} \;/\; u\in N \cap E_m^{\perp}\right\rbrace.$$

Clearly $d_m$ is a non-decreasing sequence. We will show that $d_m \rightarrow \infty$.
Assume that  $(d_m)$ is bounded.  Then there exists $v_m\in N\cap E_m^{\perp}$ and a positive constant $d$ such that  $$\int_{M} v_mP(v_m)dv_g\leq d,\quad \forall m.$$

Since $P$ is coercive it follows that the sequence $v_m$ is bounded in $L^2_{1,f}(M)$ and therefore it is weakly convergent to $v\in L^2_{1,f}(M)$. 
Since $v$ is orthogonal to every $E_m$ we must have that $v=0$.
Since the inclusion  $L^2_{1,f}(M)\subset L_f^{q+1}(M)$ is compact, a subsequence satisfies that
$v_m\rightarrow 0$ strongly in $L^{q+1}_f(M)$.

Also by Lemma 2.3 there exists a constant  $K>0$ such that
$$K<\dfrac{|v_m|_{L^2_{1}(M)}}{|v_m|_{L^{q+1}(M)}}.$$
Using that $P$ is coercive and the definition of $N$ we have that there exists a positive constant $K_0$ such that
$$\dfrac{|v_m|_{L^2_{1}(M)}}{|v_m|_{L^{q+1}(M)}} \leq K_0 \dfrac{\left(\int_M v_mP(v_m)dv_g\right)^{1/2}}{\left(\int_M v_mP(v_m)dv_g\right)^{1/(q+1)}}.$$
Then since  $q+1>2$ it follows that $$\int_M v_mP(v_m)dv_g$$ 
is bounded from below by a positive constant. Again by definition of $N$, the sequence $$|v_m|_{L^{q+1}(M)}$$ also must be bounded from below by a positive constant. This is a contradiction. 

Therefore $d_m\rightarrow\infty$.

Now, following  a similar approach used in \cite{Ambrosetti} we will compare the values $d_m$ and $c_{m}$:

First note that for all $u\in L_{1,f}^2(M)-\{0\}$ there exists a constant $C>1$ such that $$\left(\int_{M}uP(u)dv_g\right)^{1/2}< C|u|_{L^2_1(M)}.$$

Moreover  we can find a unique positive constant $\alpha(u)$ such that $\alpha(u)u\in N$. In particular  
\begin{equation}\label{condition-N}
\frac{1}{2(\alpha(u))^{q-1}}\int_M uP(u)dv_g=\frac{1}{q+1}\int_M |u|^{q+1}dv_g.
\end{equation}

Additionally, if $u\in E_m^{\perp}$ then 
\begin{equation}\label{condition-dm}
d_m\leq \left(\int_M\alpha uP(\alpha u)dv_g\right)^{1/2}< \alpha C |u|_{L^2_1(M)}.
\end{equation}

These constants are fundamental in the next construction

\begin{lemma}{ \cite{Ambrosetti}}
For each $m$ there exists $h_m\in \Gamma^*$ such that $$\inf\limits_{u\in E_m^{\perp}\cap \partial B(1)}I(h_m(u))\geq kd_m^2$$
for some constant $k>0.$  
\end{lemma}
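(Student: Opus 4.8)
The plan is to construct, for each fixed $m$, an explicit odd homeomorphism $h_m$ of $L^2_{1,f}(M)$ that both lies in $\Gamma^*$ (i.e. $I(h_m(B(1)))\ge 0$) and radially pushes the unit sphere out far enough that the ``mountain pass pass'' it realizes over $E_m^{\perp}$ sits at height comparable to $d_m^2$. The natural candidate is a radial map of the form $h_m(u) = \beta(|u|_{L^2_1(M)})\,u$ for a suitable continuous, strictly increasing function $\beta:[0,\infty)\to[0,\infty)$ with $\beta(0)=0$; such a map is automatically odd and, if $\beta$ is a homeomorphism of $[0,\infty)$, it is a homeomorphism of the Hilbert space. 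The two competing requirements on $\beta$ are: (i) $\beta$ must be large on $[0,1]$ so that on $u\in E_m^{\perp}\cap\partial B(1)$ the vector $h_m(u)$ has $L^2_1$-norm past the Nehari point $\alpha(u)$, forcing $I(h_m(u))$ to be large; (ii) $\beta$ must not be \emph{too} large globally, lest $I$ become negative somewhere on the image of $B(1)$ and $h_m$ fall out of $\Gamma^*$.

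The key estimates come from the material already assembled in the excerpt. First, by \eqref{condition-dm}, for $u\in E_m^{\perp}$ with $|u|_{L^2_1(M)}=1$ the Nehari scaling factor satisfies $\alpha(u) > d_m/C$, so to ``clear'' the Nehari sphere in the $E_m^{\perp}$ direction it suffices to take $\beta(1)$ of order $d_m$. Second, on the Nehari manifold one has the identity $I(w) = \left(\frac12 - \frac1{q+1}\right)\int_M wP(w)\,dv_g$, and since $q+1>2$, for $w = s\,w_0$ with $w_0\in N$ and $s\ge 1$ one checks $I(sw_0)$ is nonnegative as long as $s$ is not too large, while $I(sw_0)\to -\infty$ as $s\to\infty$; more precisely along any ray $I(tu)$ is positive for $t$ up to a definite multiple of the Nehari value $\alpha(u)$ and then turns negative. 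Combining: choose $\beta$ so that $\beta(r)$ interpolates between a large value ($\sim d_m$) near $r=1$ — enough to pass $\alpha$ but by a bounded ratio so $I$ stays $\ge 0$ — and the identity (or a controlled sublinear tail) for large $r$, so that on all of $B(1)$ the image never overshoots into $\{I<0\}$. Because $\alpha(u)$ is comparable to $1/d_m$ only from below — we also need an upper bound $\alpha(u)\le C'/d_m$ coming from $d_m\le(\int uP(u))^{1/2}$ combined with Lemma 2.3 via \eqref{condition-N} — the window for $\beta(1)$ is a genuine interval $[c\,d_m, C d_m]$, and picking $\beta(1)$ in it yields $I(h_m(u))\ge\left(\tfrac12-\tfrac1{q+1}\right)\int_M h_m(u)P(h_m(u))\,dv_g \gtrsim d_m^2$ on $E_m^{\perp}\cap\partial B(1)$.

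Concretely I would: (1) record the two-sided comparison $c\,d_m \le \alpha(u) \le C'\,d_m^{-1}$ — wait, the correct reading is $\alpha(u)|u|$ comparable to the Nehari radius, so that $\alpha(u)$ for unit $u\in E_m^\perp$ satisfies $\alpha(u)\asymp$ const, with the relevant point being $\alpha(u)C|u| \ge d_m$ from \eqref{condition-dm}; (2) from \eqref{condition-N} solve for $I(tu)$ as a function of $t$ along the ray through $u$, getting $I(tu) = \tfrac{t^2}{2}\int uP(u) - \tfrac{t^{q+1}}{(q+1)\alpha(u)^{q-1}}\int uP(u)$, so $I(tu)\ge 0$ precisely for $0\le t\le t_*(u)$ with $t_*(u) = \big(\tfrac{q+1}{2}\big)^{1/(q-1)}\alpha(u)$; (3) define $\beta$ to be a fixed odd homeomorphism of $\mathbb R$, depending on $m$ only through a scalar $\mu_m\asymp d_m$, with $\beta(r)=\mu_m r$ for $r\le 1$ and $\beta$ continued for $r>1$ so that along every ray the point $\beta(r)u$ stays in $\{t\le t_*(u)\}$ — this uses that $t_*(u)\ge \mathrm{const}\cdot\alpha(u)$ and that we can bound $\alpha(u)$ below uniformly in $u\in \overline{B(1)}$ by the Nehari argument plus Lemma 2.3, so a single $m$-independent shape of tail works after the $\mu_m$ rescaling; (4) verify $h_m(u)=\beta(|u|_{L^2_1})u$ is an odd homeomorphism with $I(h_m(B(1)))\ge 0$, hence $h_m\in\Gamma^*$; (5) evaluate on $u\in E_m^\perp\cap\partial B(1)$: there $h_m(u)=\mu_m u$, and since $\mu_m\asymp d_m \le \alpha(u)C$ one has (after possibly shrinking the constant) $\mu_m \le t_*(u)$ yet $\mu_m$ proportional to $\alpha(u)$ away from $0$ and $t_*(u)$, so $I(\mu_m u) = \mu_m^2\big(\tfrac12 - \tfrac{\mu_m^{q-1}}{(q+1)\alpha(u)^{q-1}}\big)\int uP(u) \ge k\,\mu_m^2 \ge k'\,d_m^2$. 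The main obstacle is step (3): arranging a single radial profile whose tail simultaneously respects the ray-dependent ceiling $t_*(u)$ for \emph{all} $u$ in the unit ball — this is exactly the delicate point in the Ambrosetti–Rabinowitz construction, and it is handled by proving the uniform lower bound $\inf_{u\in\overline{B(1)}\setminus\{0\}}\alpha(u)|u|_{L^2_1}^{-1}>0$ (equivalently a uniform lower bound for $t_*(u)/|u|_{L^2_1}$), which follows from coercivity of $P$ and Lemma 2.3 much as in the proof that $d_m\to\infty$ above, and then taking $\beta$ linear with slope $\mu_m$ on a slightly larger interval and capping it (say $\beta(r)=\mu_m$ for $r\ge 1$, made into a homeomorphism by a strictly increasing but slowly growing continuation) so that $|h_m(u)|_{L^2_1}\le \mu_m$ throughout $B(1)$, which is $\le$ the uniform ceiling once $\mu_m$ is chosen $\asymp d_m$ with the right constant.
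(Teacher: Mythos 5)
There is a genuine gap, and it is located exactly where you flag "the main obstacle": your radial map cannot lie in $\Gamma^*$. On $B(1)$ your $h_m$ is multiplication by $\mu_m\asymp d_m$, so $h_m(B(1))$ is the ball of radius $\asymp d_m$. For $I(h_m(B(1)))\ge 0$ you would need $\mu_m\le t_*(u)$ along \emph{every} ray through the unit sphere, i.e.\ a lower bound on $\alpha(u)$ of order $d_m$ valid for \emph{all} unit vectors $u$. But the inequality $\alpha(u)>d_m/C$ from \eqref{condition-dm} holds only for $u\in E_m^{\perp}$; that is the whole content of the definition of $d_m$ as an infimum over $N\cap E_m^{\perp}$. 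For a unit vector $u$ in, say, $E_1$ (a fixed direction independent of $m$), $\alpha(u)$ and hence $t_*(u)$ is a fixed finite constant, while $d_m\to\infty$. So for large $m$ the point $\mu_m u$ lies in $\{I<0\}$ and $h_m\notin\Gamma^*$. The uniform bound $\inf_{|u|_{L^2_1}=1}\alpha(u)>0$ that you invoke to repair this is indeed true (it follows from Lemma 2.3 and coercivity), but it is an $m$-independent constant; it licenses scaling the whole ball by a fixed constant, not by $d_m$. No choice of radial profile $\beta$ can reconcile the two requirements, because the obstruction is directional, not radial.

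This is precisely why the paper (following Ambrosetti--Rabinowitz) uses an \emph{anisotropic linear} map: writing $u=u_1+u_2$ with $u_1\in E_m^{\perp}$, $u_2\in E_m$, it sets $h_m(u)=\frac{d_m}{C}u_1+\epsilon u_2$, expanding by $d_m/C$ only in the $E_m^{\perp}$ directions (where \eqref{condition-dm} guarantees $I>0$) and compressing by a small $\epsilon$ in the $E_m$ directions. The nontrivial step is then showing that some $\epsilon>0$ keeps the sum $\frac{d_m}{C}(E_m^{\perp}\cap B(1))\oplus\epsilon(E_m\cap B(1))$ inside $\{I\ge 0\}$, which the paper does by a contradiction/weak-compactness argument using Lemma 2.3. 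Your step (5), the evaluation of $I$ on $E_m^{\perp}\cap\partial B(1)$ giving $I\ge k d_m^2$, is correct and coincides with the paper's computation; the missing ingredient is replacing the radial construction by the direct-sum construction and supplying the compactness argument for the choice of $\epsilon$.
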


\begin{proof}
Using \ref{condition-N},\ref{condition-dm} we have that, for each $u\in E_m^{\perp}$ with $|u|_{L^2_1(M)}\leq 1$,
\begin{align*}
I\left(\frac{d_m}{C}u\right)=&\frac{d_m^2}{2C^2}\int_M uP(u)dv_g-\left({\frac{d_m}{C}}\right)^{q+1}\frac{1}{2\alpha^{q-1}}\int_M uP(u)dv_g\\
=&\frac{d_m^2}{C^2}\left(1-\left(\frac{d_m}{C\alpha}\right)^{q-1}\right)\left(\frac{1}{2}\int_M uP(u)dv_g\right)\\
\geq& d_m^2k_0\int_M uP(u)dv_g
\end{align*}
for some $k_0>0$. 

Thus 
\begin{equation}\label{A}
\frac{d_m}{C}( E_m^{\perp}\cap B(1))\subset \{I> 0\}\cup\{0\}.
\end{equation}

Furthermore, from the coercivity of $P$ it follows that $$I\left(\frac{d_m}{C}u\right)\geq kd_m^2,$$
for some $k>0$ and for all $u\in E_m^{\perp}\cap \partial B(1)$.

We can suppose that there is some $\epsilon>0$ for which 
\begin{equation}\label{B}
Z_{\epsilon}:=\frac{d_m}{C}( E_m^{\perp}\cap B(1))\bigoplus\epsilon(E_m\cap B(1))\subset \{I>0\}\cup \{0\}.
\end{equation}

Indeed, if it is not the case, there exist $\epsilon_i$ and $u_i\in Z_{\epsilon_i}-\{0\}$ such that $\epsilon_i\rightarrow0$ and $u_i\notin\{I>0\}$. The sequence $(u_i)$ is bounded in $L_{1,f}^2(M)$, so as we discussed above $u_i$ converges weakly in $L_{1,f}^2(M)$ and strongly in $L^{q+1}_f(M)$ to $u_0$. Again by Lemma 2.3 and coercivity of $P$,
$$0<K<\dfrac{\int_M u_iP(u_i)dv_g}{|u_i|^2_{L^{q+1}(M)}}.$$
Since $I(u_i)\leq 0$ and $q>1$ it follows that $|u_i|_{L^{q+1}(M)}$ is uniformly bounded from below and therefore $\int_Mu_0^{q+1}dv_g>0$. In particular $u_0\neq 0$ and it follows from \ref{A} that $I(u_0)>0$. On the other hand since $u_0$ is the weak limit in $L^2_{1,f}(M)$ of $u_i$, we have that $I(u_0)\leq 0$. This is a contradiction and therefore we can pick a positive $\epsilon$ such that \ref{B}.

We can fix such $\epsilon$ and define the linear map $h_m$ from $L^2_{1,f}(M)$ into $L^2_{1,f}(M)$ as $$h_m(u):=\frac{d_m}{C}u_1 + \epsilon u_2,$$
where $u=u_1+u_2$ for $u_1\in E_m^{\perp}$, $u_2\in E_m$.

By construction $h_m(B(1))\subset Z_{\epsilon}\subset\{I\geq 0\}$. Then $h_m\in \Gamma^*$.

Finally, $$\inf\limits_{u\in E_m^{\perp}\cap \partial B(1)}I(h_m(u))\geq kd_m^2.$$

\end{proof}

It is clear from the definition of $c_m$ and Lemma 3.1 that $c_{m+1}\geq kd_m^2$ and hence the proof of Theorem 1.1 is complete.

\end{document}